\newcommand{\lleft}{\left}
\newcommand{\rright}{\right}
\newtheorem{thm}{Theorem}
\newtheorem{lemma}{Lemma}
\newtheorem{cor}{Corollary}
\newtheorem{pro}{Statement}
\theoremstyle{definition}
\newtheorem{remark}{Remark}
\newtheorem{defin}{Definition}
\newtheorem{exam}{Example}
\begin{document}
\begin{frontmatter}

\title{Extreme residuals in regression model. Minimax approach}

\author[a]{\inits{A.}\fnm{Aleksander}\snm{Ivanov}\corref{cor1}}\email
{alexntuu@gmail.com}
\cortext[cor1]{Corresponding author.}
\author[b]{\inits{I.}\fnm{Ivan}\snm{Matsak}}\email{ivanmatsak@univ.kiev.ua}
\author[b]{\inits{S.}\fnm{Sergiy}\snm{Polotskiy}}\email
{sergiy.polotskiy@gmail.com}
\address[a]{National Technical University of Ukraine ``Kyiv Polytechnic
Institute'', 37~Peremogy Ave., Kyiv, 03056, Ukraine}
\address[b]{Taras Shevchenko National University of Kyiv, 4e
Academician Glushkov Ave., Kyiv, 03127, Ukraine}

\markboth{A. Ivanov et al.}{Extreme residuals in
regression model. Minimax approach}

\begin{abstract}
We obtain limit theorems for extreme residuals in linear regression
model in the case of minimax estimation of parameters.
\end{abstract}

\begin{keyword}
Linear regression\sep
minimax estimator\sep
maximal residual
\MSC[2010] 60G70\sep62J05
\end{keyword}

\received{26 August 2015}
\revised{22 September 2015}
\accepted{22 September 2015}
\publishedonline{5 October 2015}
\end{frontmatter}

\section{Introduction} Consider the model of linear regression
\begin{gather}
\label{def_regres} y_j = \sum_{i=1}^q
\theta_i x_{ji} + \epsilon_j, \quad j =
\overline{1,N},
\end{gather}
where $\theta= (\theta_1, \theta_2,\ldots,\theta_q)$
is an unknown parameter, $\epsilon_j$ are independent
identically distributed (i.i.d.) random variables (r.v.-s) with
distribution function (d.f.) $F(x)$, and $X=(x_{ji})$ is a
regression design matrix.

Let $\widehat{\theta} = (\widehat{\theta_1},\ldots,
\widehat{\theta_q})$ be the least squares estimator (LSE) of
$\theta$. Introduce the notation
\begin{align*}
&\widehat{y_j}=\sum_{i=1}^q \widehat{\theta_i}x_{ji}, \qquad \widehat{\epsilon_j}= y_j - \widehat{y_j},\quad j=\overline{1,N};\\
&Z_N = \max_{1\leq j \leq N}\epsilon_j, \qquad \widehat{Z_N} = \max_{1\leq j \leq N}\widehat{\epsilon_j},\\
&Z_N^* = \max_{1\leq j \leq N}|\epsilon_j|, \qquad \widehat{Z_N}^* = \max_{1\leq j \leq N}|\widehat{\epsilon_j}|.
\end{align*}

Asymptotic behavior of the r.v.-s $Z_N$, $Z_N^*$ is
studied in the theory of extreme values (see classical works by
Frechet \cite{fre}, Fisher and Tippet \cite{fit},
and Gnedenko \cite{gne} and monographs \cite{gal,llr}).
In the papers \cite{im13,im14}, it was shown that under
mild assumptions asymptotic properties of the r.v.-s $Z_N$,
$\widehat{Z_N}$, $Z_N^*$, and $\widehat{Z_N}^*$ are similar
in the cases of both finite variance and heavy tails of
observation errors $\epsilon_j$.

In the present paper, we study asymptotic properties of minimax estimator
(MME) of $\theta$ and maximal absolute residual. For
MME, we keep the same notation $\widehat{\theta}$.

\begin{defin}\label{def1}
A random variable $\widehat{\theta} =( \widehat{\theta_1},\ldots,
\widehat{\theta_q})$ is called MME for $\theta$ by the
observations (\ref{def_regres})
\begin{gather}
\label{def_MME} \widehat{\varDelta} = \varDelta(\widehat{\theta}) =\underset{\tau\in
\mathbb{R}^{q}} {\min}\varDelta(\tau),
\end{gather}
where
\begin{gather*}
\varDelta(\tau)= \underset{1 \leq j \leq N} {\max}\left|y_j - \sum
_{i=1}^{q}\tau_i x_{ji}\right|.
\end{gather*}

Denote
$W_N = {\min_{1 \leq j \leq N}} \epsilon_j$
and let $R_N = Z_N - W_N$ and $Q_N = \tfrac{Z_N +
W_N}{2}$ be the range and midrange of the sequence $\epsilon_j,\
j=\overline{1,N}$.
\end{defin}

The following statement shows essential difference in the behavior
of MME and LSE.

\begin{pro}\label{stat1}
\begin{enumerate}

\item[\emph{(i)}] If the model (\ref{def_regres}) contains a constant
term, namely,
$x_{j1} = 1$, $j = \overline{1,N}$, then almost
surely (a.s.)
\begin{gather}
\label{Delta_hat} \widehat{\varDelta} \leq\dfrac{R_N}{2}. \
\end{gather}
\item[\emph{(ii)}] If the model (\ref{def_regres}) has the form
\begin{gather}
\label{reg_form1_ii} y_j = \theta+ \epsilon_j, \quad j =
\overline{1,N} ,
\end{gather}
then a.s.
\begin{gather*}
\widehat{\varDelta} = \dfrac{R_N}{2}, \qquad\widehat{\theta} - \theta=
Q_N.
\end{gather*}
\end{enumerate}
\end{pro}

\begin{remark}\label{rem1}
From the point \textup{(ii)} of Statement~\ref{stat1} it follows that MME
$\widehat{\theta}$ is not consistent in the model
\eqref{reg_form1_ii} with some $\epsilon_j$ having all the moments
\textup{(}see Example~2\textup{)}.
\end{remark}

\begin{remark}\label{rem2}
The value $\widehat{\varDelta}$ can be represented as a solution of
the following linear programming problem \textup{(}LPP\textup{)}:
\begin{align}
\label{LPP} \widehat{\varDelta} &\,{=}\, \underset{\varDelta\in\mathcal{D}} {\min}\varDelta,\\
\mathcal{D} &\,{=}\, \Biggl\{ (\tau, \varDelta) \in\mathbb{R}^q\,{\times}\,\mathbb{R}_{+}: \left|y_j \,{-}\, \sum_{i=1}^{q}\tau_i x_{ji}\right| \,{\leq}\,\varDelta, \ j \,{=}\, \overline{1,N} \Biggr\}\notag\\
&\,{=}\,\Biggl\{ (\tau, \varDelta) \in \mathbb{R}^q  \,{\times}\,\mathbb{R}_{+}: \sum_{i=1}^{q}\tau_i x_{ji} \,{+}\, \varDelta \,{\geq}\, y_j, -\sum_{i=1}^{q}\tau _ix_{ji}\,{+}\, \varDelta \geq-y_j, \ j \,{=}\, \overline{1,N} \Biggr\}.\notag
\end{align}
\end{remark}

So, the problem (\ref{def_MME}) of determination of the values
$\widehat{\varDelta}$ and $\widehat{\theta}$ is reduced to solving LPP
(\ref{LPP}). The LPP can be efficiently solved
numerically by the simplex method; see \cite{elmt,zay}).
Investigation of asymptotic properties of maximal absolute residual
$\widehat{\varDelta}$ and MME
$\widehat{\theta}$ is quite
difficult in the case of general model (\ref{def_regres}). However,
under additional assumptions on regression experiment design and
observation errors $\epsilon_j$, it is possible to find the limiting
distribution of $\widehat{\varDelta}$, to prove the consistency of MME
$\widehat{\theta}$,
and even estimate the rate of convergence $\widehat{\theta}\to\theta$, $N\to\infty$.

\section{The main theorems} First, we recall briefly some results
of extreme value theory. Let r.v.-s
$(\epsilon_j )$ have the d.f. $F(x) $. Assume that for some constants $b_n
> 0$ and $ a_n $, as $n\rightarrow\infty$,
%
\begin{equation}
\label{f6} b_n (Z_n -a_n ) \stackrel{D}
\longrightarrow \zeta,
\end{equation}
and $\zeta$ has a nondegenerate d.f. $G(x) = \mathbb{P}( \zeta < x )$.
If assumption
(\ref{f6}) holds, then we say that d.f. $F$ belongs to the domain
of maximum attraction of the probability distribution
$G$ and write $F \in D(G) $.\vspace{10pt}

{\em If $F \in D(G) $, then $G$ must have just one of the following three
types of distributions \textup{\cite{gne,llr}:}\medskip

{Type I:}
\[
\varPhi_{\alpha} (x) =  %
\begin{cases}
0, &  x \leq0 , \\
\exp\big(-x^{-\alpha}\big), &  \alpha> 0, \  x > 0 ; \nonumber\\
\end{cases}
\]

{Type II:}
\[
\varPsi_{\alpha} (x) =
\begin{cases}
\exp\big(-(-x)^{\alpha}\big), &  \alpha> 0, \  x \leq0 , \\
1, &  x > 0 ;
\end{cases}
\]

{Type III:}
%
\begin{eqnarray}
\label{f7}
\quad\varLambda(x) = \exp\bigl(-e^{-x}\bigr) , \, \- \infty< x
<\infty .
\end{eqnarray}
}\vspace{-10pt}

Necessary and sufficient conditions for convergence to each of
d.f.-s $ \varPhi_{\alpha}$, $\varPsi_{\alpha}$, $\varLambda$ are also well
known.\vadjust{\eject}

Suppose in the model (\ref{def_regres})
that:
\begin{description}
\item[(A1)] ($\epsilon_j$) are symmetric r.v.-s;
\item[(A2)] ($\epsilon_j$) satisfy relation (\ref{f6}), that is, $F \in
D(G) $
with normalizing constants $a_n$ and $b_n$, where $G$ is one
of the d.f.-s. $ \varPhi_{\alpha}$, $\varPsi_{\alpha}$, $\varLambda$
defined in (\ref{f7}).
\end{description}
Assume further that regression experiment design is organized as
follows:
\begin{align}
x_j &= (x_{j1},\ldots,x_{jq}) \in \{v_1, v_2,\ldots,v_k \},\quad v_l =(v_{l1}, \ldots, v_{lq}) \in \mathbb{R}^q,\nonumber\\
v_m &\neq v_l, \quad m\neq l;\label{x_j}
\end{align}
that is, $x_j$ take some fixed values only. Besides, suppose that
\begin{gather}
\label{x_j_V} x_j = V_l \quad\text{for} \ j \in
I_l, \ l = \overline{1,k},
\end{gather}
$\operatorname{card}(I_l) = n$, \xch{$I_m\cap I_l = \oslash$}{$I_m\bigcap I_l = \oslash$}, $m \neq l$,
 $N = kn$ is the sample size,
\begin{gather*}
V = \lleft(%
\begin{matrix}
v_{11} & v_{12} & \ldots& v_{1q} \\
v_{21} & v_{22} & \ldots& v_{2q} \\
\ldots& \ldots& \ldots& \ldots\\
v_{k1} & v_{k2} & \ldots& v_{kq} \\
\end{matrix}
 \rright) .
\end{gather*}


\begin{thm}\label{thm1}

Under assumptions \textup{(A1)}, \textup{(A2)},
\eqref{x_j}, and \eqref{x_j_V},
\begin{gather}
\label{theor_1_conv} \varDelta_n = b_n(\widehat{\varDelta}
-a_n) \overset{D}\to\varDelta_0 , \quad n\to\infty,
\end{gather}
where
\begin{align}
\varDelta_0 &= \underset{u \in\mathcal{D}^*} {\max}L_0^*(u),\nonumber\\
L_0^*(u) &= \sum_{l=1}^{k}\bigl(u_l\zeta_l +u'_l\zeta'_l \bigr), \quad u = \bigl(u_1,\ldots,u_k,u'_1,\ldots,u'_k\bigr),\nonumber\\
\mathcal{D}^* &= \Biggl\{ u\geq0: \sum_{l=1}^{k}\bigl(u_l - u'_l\bigr)v_{li} = 0, \, \sum_{l=1}^{k} \bigl(u_l+ u'_l\bigr) = 1 ,\ i = \overline{1,q}\label{D_star} \Biggr\},
\end{align}
$\zeta_{l}$, $\zeta'_{l}$ , $l=\overline{1,k}$, are i.r.v.-s
having d.f. $G(x)$.
\end{thm}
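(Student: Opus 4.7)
The plan is to pass from the primal LP in Remark~\ref{rem2} to its LP dual, where the objective becomes a linear functional of the errors $\epsilon_j$, and then apply extreme value theory together with the continuous mapping theorem. Applying standard LP duality to~\eqref{LPP} with multipliers $u_j, u_j' \geq 0$ for the two families of inequalities gives
\begin{gather*}
\widehat{\varDelta} = \max\sum_{j=1}^{N}(u_j - u_j')y_j
\end{gather*}
subject to $\sum_{j=1}^N(u_j - u_j')x_{ji} = 0$ for $i = \overline{1,q}$, $\sum_{j=1}^N(u_j + u_j') \leq 1$, and $u_j, u_j' \geq 0$. Substituting $y_j = \sum_i\theta_i x_{ji} + \epsilon_j$ and invoking the first dual constraint makes the $\theta$-terms vanish, so $\widehat{\varDelta} = \max\sum_{j=1}^N(u_j - u_j')\epsilon_j$ over the same feasible set; in particular, the law of $\widehat{\varDelta}$ is free of $\theta$.

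Next I would exploit the grouped design~\eqref{x_j_V}. Within group $I_l$ the dual constraints involve the per-group weights only through $U_l - U_l'$ and $U_l + U_l'$, where $U_l := \sum_{j\in I_l}u_j$ and $U_l' := \sum_{j \in I_l}u_j'$. For fixed $U_l, U_l'$, the per-group contribution to the objective is maximized by placing all $u$-mass on $M_l^+ := \max_{j \in I_l}\epsilon_j$ and all $u'$-mass on $M_l^- := \min_{j \in I_l}\epsilon_j$, giving $U_l M_l^+ - U_l' M_l^-$. This reduces the dual to
\begin{gather*}
\widehat{\varDelta} = \max_{(U, U') \in \mathcal{E}}\sum_{l=1}^{k}\bigl(U_l M_l^+ - U_l' M_l^-\bigr),
\end{gather*}
with $\mathcal{E} = \{(U, U') \geq 0 : \sum_l v_{li}(U_l - U_l') = 0, \ \sum_l(U_l + U_l') \leq 1\}$. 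The elementary lower bound $\widehat{\varDelta} \geq (M_l^+ - M_l^-)/2$ (obtained by minimizing $\max_{j \in I_l}|\epsilon_j - \mu_l|$ over $\mu_l$ in each group) is a.s.\ positive, so by complementary slackness the constraint $\sum_l(U_l + U_l') \leq 1$ is tight at the optimum.

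Under the equality $\sum_l(U_l + U_l') = 1$, adding and subtracting $a_n$ gives $\sum_l(U_l M_l^+ - U_l' M_l^-) - a_n = \sum_l[U_l(M_l^+ - a_n) + U_l'(-M_l^- - a_n)]$, whence
\begin{gather*}
b_n(\widehat{\varDelta} - a_n) = \max_{u \in \mathcal{D}^*}\sum_{l=1}^{k}\bigl(u_l\zeta_l^{(n)} + u_l'\zeta_l'^{(n)}\bigr),
\end{gather*}
where $\zeta_l^{(n)} := b_n(M_l^+ - a_n)$, $\zeta_l'^{(n)} := b_n(-M_l^- - a_n)$, and $\mathcal{D}^*$ is as in~\eqref{D_star}. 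By assumption (A1), $-\epsilon_j$ has d.f.\ $F$, so each $\zeta_l'^{(n)}$ converges in distribution to a r.v.\ with d.f.\ $G$; joint convergence $(\zeta_l^{(n)}, \zeta_l'^{(n)})_{l=1}^k \overset{D}{\to} (\zeta_l, \zeta_l')_{l=1}^k$ with all $2k$ limits independent follows from the disjointness of the groups $I_l$ and the classical asymptotic independence of sample maxima and minima of i.i.d.\ variables (\cite{gal,llr}). Since $\mathcal{D}^*$ is a fixed compact polytope and $(z, z') \mapsto \max_{u \in \mathcal{D}^*}\sum_l(u_l z_l + u_l' z_l')$ is a continuous (indeed Lipschitz) function on $\mathbb{R}^{2k}$, the continuous mapping theorem delivers~\eqref{theor_1_conv}.

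The main obstacles, as I see them, are the per-group reduction to $M_l^+, M_l^-$ (which identifies the relevant linear functionals of the errors) and the verification that the dual constraint $\sum_l(U_l + U_l') \leq 1$ is tight, since together these steps expose the precise form of $\mathcal{D}^*$ and $L_0^*$. Once in place, the remainder is a routine combination of extreme value convergence with the continuous mapping theorem, modulo technical care about joint convergence of the $2k$ limiting coefficients.
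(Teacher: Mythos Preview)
Your proof is correct and follows essentially the same route as the paper: reduce the primal to the $2k$ group constraints involving $Z_{nl}=M_l^+$ and $W_{nl}=M_l^-$, pass to the LP dual to obtain $\widehat{\varDelta}=\max_{u\in\mathcal{D}^*}L_n^*(u)$, normalize, and then invoke the joint convergence of $\bigl(b_n(Z_{nl}-a_n),\,b_n(-W_{nl}-a_n)\bigr)_{l=1}^k$ to independent $G$-distributed limits. The only noteworthy differences are cosmetic: the paper collapses the primal to $2k$ constraints \emph{before} dualizing (so the equality $\sum_l(u_l+u_l')=1$ appears directly, without your complementary-slackness step), and for the limit it restricts the maximum to the finite vertex set $\Gamma^*\subset\mathcal{D}^*$ and uses the Cram\'er--Wold device, whereas you apply the continuous mapping theorem to the Lipschitz map $z\mapsto\max_{u\in\mathcal{D}^*}\langle u,z\rangle$---both arguments are valid and yield the same conclusion.
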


For a number sequence $b_n \to\infty$ and random sequence $(\xi_n)$,
we will write\break $\xi_n \overset{P}= O(b_n^{-1})$ if
\begin{gather*}
\underset{n} {\sup}\mathbb{P}\bigl(b_n|\xi_n| > C \bigr) \to0
\quad\mbox{as} \ C\to\infty.
\end{gather*}

Assume that $k\geq q$ and there exists square submatrix $\widetilde{V} \subset V$ of order $q$
\begin{gather*}
\widetilde{V} = \lleft(%
\begin{matrix}
v_{l_{1}1} & \ldots& v_{l_{1}q} \\
\ldots& \ldots& \ldots\\
v_{l_{q}1} & \ldots& v_{l_{q}q} \\
\end{matrix}
 \rright),
\end{gather*}
such that
\begin{gather}
\label{V_determ} \det\widetilde{V} \neq0.
\end{gather}

\begin{thm}\label{thm2}
Assume that, under conditions of Theorem~\emph{\ref{thm1}}, $k \geq q$, assumption
\eqref{V_determ} holds and
\begin{gather}
\label{b_n_tend_infty} b_n \to\infty\quad as \ n\to\infty.
\end{gather}
Then MME $\widehat{\theta}$ is consistent, and
\begin{gather*}
\widehat{\theta}_i - \theta_i \overset{P} = O
\bigl(b_n^{-1}\bigr), \quad i=\overline{1,q} .
\end{gather*}
\end{thm}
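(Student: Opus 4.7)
Let $\delta_i = \widehat{\theta}_i - \theta_i$ and, for each group $l = 1,\dots,k$, set
\[
Z_n^{(l)} = \max_{j\in I_l}\epsilon_j, \qquad W_n^{(l)} = \min_{j\in I_l}\epsilon_j.
\]
The plan is to use the defining inequality of $\widehat{\varDelta}$ inside each group $I_l$ to squeeze the scalar $\sum_{i}\delta_i v_{li}$, then invert the linear system given by the $q$ rows of $\widetilde{V}$ to control each coordinate $\delta_i$.

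First I would rewrite the residual for $j\in I_l$ as $\widehat{\epsilon}_j = \epsilon_j - \sum_{i=1}^{q}\delta_i v_{li}$ and use $|\widehat{\epsilon}_j|\le\widehat{\varDelta}$ for all $j$. Taking the maximum and the minimum over $j\in I_l$ on the inequality
\[
\sum_{i=1}^{q}\delta_i v_{li} - \widehat{\varDelta} \le \epsilon_j \le \sum_{i=1}^{q}\delta_i v_{li} + \widehat{\varDelta}
\]
yields the two-sided bound
\[
Z_n^{(l)} - \widehat{\varDelta} \le \sum_{i=1}^{q}\delta_i v_{li} \le W_n^{(l)} + \widehat{\varDelta},\qquad l = \overline{1,k}.
\]
After subtracting and adding $a_n$ appropriately and multiplying by $b_n$, this becomes
\[
b_n\bigl(Z_n^{(l)}-a_n\bigr) - b_n(\widehat{\varDelta}-a_n) \le b_n\sum_{i=1}^{q}\delta_i v_{li} \le b_n\bigl(W_n^{(l)}+a_n\bigr) + b_n(\widehat{\varDelta}-a_n).
\]
By (A2), $b_n(Z_n^{(l)}-a_n)\Rightarrow\zeta_l$; by symmetry (A1), $-W_n^{(l)}$ is distributed as $Z_n^{(l)}$, hence $b_n(W_n^{(l)}+a_n)\Rightarrow -\zeta'_l$; and by Theorem~\ref{thm1}, $b_n(\widehat{\varDelta}-a_n)\Rightarrow\varDelta_0$. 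Therefore each $b_n\sum_{i=1}^{q}\delta_i v_{li}$ is $O_{P}(1)$, that is,
\[
\sum_{i=1}^{q}\delta_i v_{li} \stackrel{P}{=} O\bigl(b_n^{-1}\bigr),\qquad l = \overline{1,k}.
\]

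Next I would invoke assumption \eqref{V_determ}. Restricting the above display to the $q$ indices $l_1,\dots,l_q$ that form the rows of $\widetilde{V}$, the vector $(\sum_i\delta_i v_{l_r i})_{r=1}^{q}$ equals $\widetilde{V}\,\delta$, where $\delta = (\delta_1,\dots,\delta_q)^{\top}$. Since $\widetilde{V}$ is a fixed nonsingular matrix, $\delta = \widetilde{V}^{-1}\bigl(\sum_i\delta_i v_{l_r i}\bigr)_{r=1}^{q}$, so each coordinate $\delta_i$ is a fixed linear combination of $q$ quantities of order $O_{P}(b_n^{-1})$, giving
\[
\widehat{\theta}_i - \theta_i = \delta_i \stackrel{P}{=} O\bigl(b_n^{-1}\bigr),\qquad i = \overline{1,q}.
\]
Combined with $b_n\to\infty$ from \eqref{b_n_tend_infty}, this also yields consistency $\widehat{\theta}_i\to\theta_i$ in probability.

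The main technical point is the passage from the deterministic inequality $Z_n^{(l)}-\widehat{\varDelta}\le\sum_i\delta_i v_{li}\le W_n^{(l)}+\widehat{\varDelta}$ to the stochastic bound $b_n\sum_i\delta_i v_{li}=O_P(1)$: everything else is linear algebra via $\widetilde{V}^{-1}$. The symmetry assumption (A1) is essential here, since without it one has no control on $b_n(W_n^{(l)}+a_n)$ in the normalization provided by (A2). Once that asymmetric-error difficulty is handled by (A1), the argument reduces to inverting a fixed $q\times q$ system, which is routine.
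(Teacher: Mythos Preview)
Your argument is correct and follows essentially the same route as the paper: both derive the two-sided bound $Z_{nl}-\widehat{\varDelta}\le\sum_i(\widehat{\theta}_i-\theta_i)v_{li}\le W_{nl}+\widehat{\varDelta}$, use Theorem~\ref{thm1} together with (A1)--(A2) to conclude that each $\gamma_l=\sum_i(\widehat{\theta}_i-\theta_i)v_{li}$ is $O_P(b_n^{-1})$, and then invert the nonsingular $q\times q$ block $\widetilde{V}$ (the paper does this via Cramer's rule, you via $\widetilde{V}^{-1}$). The only cosmetic difference is that the paper reads the inequalities off the LPP constraints \eqref{f21} rather than directly from $|\widehat{\epsilon}_j|\le\widehat{\varDelta}$.
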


\begin{exam}\label{exam1}

Let in the model of simple linear regression
\begin{gather}
\label{example_1} y_j=\theta_0 + \theta_1x_j
+ \epsilon_j, \quad j = \overline{1,N},
\end{gather}
$x_j = v$, $j=\overline{1,N}$, that is, $k=1$ and $q=2$.

Then such a model can be rewritten in the form
(\ref{reg_form1_ii}) with $\theta= \theta_0 + \theta_1 v$.
Clearly, the parameters $\theta_0$, $\theta_1$ cannot be defined
unambiguously here. So, it does not make sense to speak about
the consistency of MME $\widehat{\theta}$ when $k < q$.
\end{exam}

\begin{exam}\label{exam2}
Consider regression model
(\ref{reg_form1_ii}) with errors $\epsilon_j$ having the Laplace
density $f(x) = \frac{1}{2}e^{-|x|}$. For this distribution, the
famous von Mises condition is satisfied (\cite{llr}, p.~16) for the type
III distribution, that is, $F \in D(\varLambda)$. For symmetric $F \in
D(\varLambda), $ we have
\begin{gather*}
\underset{n \to\infty}\lim\mathbb{P}\{ 2b_nQ_n < x \}
= \dfrac{1}{1+e^{-x}}.
\end{gather*}
The limiting distribution is a logistic one (see \cite{m14},
p.~62). Using further well-known formulas for the type $\varLambda$
(\cite{m14}, p.~49) $a_n=F^{-1}(1-\frac{1}{n})$ and $ b_n=nf(a_n)$,
we find $ a_n = \ln\frac{n}{2}$ and $b_n = 1$.  From Statement
1 it follows now that MME $\widehat{\theta}$ is
not consistent. Thus, condition (\ref{b_n_tend_infty}) of Theorem~\ref{thm2}
cannot be weakened.
\end{exam}

The following lemma allows us to check condition
(\ref{b_n_tend_infty}).

\begin{lemma}\label{lemma1}
Let $F \in D(G)$. Then we have:
\begin{enumerate}
\item If $G = \varPhi_{\alpha}$, then
\begin{align*}
x_{F} &= \sup\bigl\{ x: F(x) <1\bigr\} = \infty, \qquad\gamma_n = F^{-1}\biggl(1-\frac{1}{n}\biggr)\to\infty, \\
b_n &= \gamma_n^{-1}\to 0 \quad\mbox{as }\ n\to\infty.
\end{align*}
Thus, \eqref{b_n_tend_infty} does not hold.
\item If $G = \varPsi_{\alpha}$, then
\begin{align*}
x_{F}<\infty,\quad 1 - F(x_{F}-x) = x^{\alpha}L(x),
\end{align*}
where $L(x)$ is a slowly varying (s.v.) function at
zero, and there exists s.v.\ at infinity function $L_1(x)$ such
that
\begin{align*}
b_n=(x_F - \gamma_n)^{-1} = n^{\alpha}L_1(n)\to\infty \quad\mbox{as }\ n\to\infty.
\end{align*}
So \eqref{b_n_tend_infty} is true.\vadjust{\eject}
\item If $G = \varLambda$, then
\begin{align*}
b_n=r(\gamma_n),\quad\mbox{where }\ r(x) = R^{\prime}(x), R(x) = -\ln(1-F(x)).
\end{align*}
Clearly, \eqref{b_n_tend_infty} holds if
\begin{align*}
x_F = \infty,\qquad r(x)\to\infty\quad\mbox{as }\ x\to\infty.
\end{align*}
\end{enumerate}
\end{lemma}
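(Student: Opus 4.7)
The plan is to verify each of the three cases by invoking the classical characterizations of the three domains of maximum attraction (see \cite{gne,llr,gal}) and then read off the asserted behavior of $b_n$ from the standard choice of normalizing constants.

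For part 1, I would use the well-known fact that $F\in D(\varPhi_\alpha)$ if and only if $x_F=\infty$ and $1-F(x)=x^{-\alpha}L(x)$ with $L$ slowly varying at infinity. With this characterization, the canonical normalizing constants can be taken as $a_n=0$ and $b_n=\gamma_n^{-1}$, where $\gamma_n=F^{-1}(1-1/n)$. Since $x_F=\infty$ and $F$ is non-decreasing and right-continuous with $F(\gamma_n)\uparrow1$, we immediately get $\gamma_n\to\infty$, whence $b_n=\gamma_n^{-1}\to 0$, violating \eqref{b_n_tend_infty}.

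For part 2, I would appeal to the Gnedenko-type criterion $F\in D(\varPsi_\alpha)\Leftrightarrow x_F<\infty$ and $1-F(x_F-x)=x^{\alpha}L(x)$ with $L$ slowly varying at $0$. The canonical centering/normalizing is $a_n=x_F$ and $b_n=(x_F-\gamma_n)^{-1}$. The key step is to invert the defining relation $\tfrac{1}{n}=1-F(\gamma_n)=(x_F-\gamma_n)^{\alpha}L(x_F-\gamma_n)$: since the right-hand side is regularly varying of positive index in the variable $x_F-\gamma_n$, its inverse function is regularly varying of the reciprocal index, which delivers a representation of the type $b_n=n^{\alpha}L_1(n)$ for an appropriate slowly varying $L_1$ at infinity (this is the only step that requires a nontrivial argument; all the rest is bookkeeping). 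Regular variation of positive index forces $b_n\to\infty$, so \eqref{b_n_tend_infty} holds.

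For part 3, under the von Mises-type differentiability assumption implicit in the formula $b_n=r(\gamma_n)$, $r=R'$, $R=-\ln(1-F)$, the hazard rate $r$ provides the canonical normalizing sequence for $F\in D(\varLambda)$; see \cite{llr}. Because $F(\gamma_n)\uparrow 1$ we have $\gamma_n\to x_F$, so if $x_F=\infty$ and $r(x)\to\infty$ as $x\to\infty$, then $b_n=r(\gamma_n)\to\infty$, as asserted. The main obstacle throughout is not conceptual but rather correctly identifying the canonical normalizers in each type and, for part 2, converting the regular-variation statement for $1-F$ into the stated form for $b_n$ via inversion of regularly varying functions.
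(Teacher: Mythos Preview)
The paper does not actually prove Lemma~\ref{lemma1}; immediately after the statement it simply writes ``Similar results can be found in \cite{m14}, Corollary~2.7, pp.~44--45; see also \cite{gal,llr}.'' Your proposal is therefore not competing against an in-paper argument but rather supplying what the authors chose to omit, and the route you take---invoking the Gnedenko/von Mises characterizations of $D(\varPhi_\alpha)$, $D(\varPsi_\alpha)$, $D(\varLambda)$ and reading off the canonical normalizers---is exactly the standard one behind those references.

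One point worth flagging: in part~2 you reproduce the exponent $n^{\alpha}$ from the lemma statement, but if you actually carry out the inversion you describe, $1/n=(x_F-\gamma_n)^{\alpha}L(x_F-\gamma_n)$ gives $x_F-\gamma_n$ regularly varying of index $1/\alpha$ in $1/n$, hence $b_n=(x_F-\gamma_n)^{-1}=n^{1/\alpha}L_1(n)$, not $n^{\alpha}L_1(n)$. This is consistent with the paper's own later discussion (after Remark~\ref{rem4}), where it writes $b_n=n^{1/\alpha}L_1(n)$ and deduces that condition~\eqref{remar_4} holds for $\alpha\in(0,2)$; that range only makes sense with exponent $1/\alpha$. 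So the printed $n^{\alpha}$ in the lemma is almost certainly a typo, and your sketch should yield $n^{1/\alpha}$ rather than confirm the misprint. This does not affect the qualitative conclusion $b_n\to\infty$, which is all the paper actually needs.
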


Similar results can be found in \cite{m14}, Corollary 2.7, pp.
44--45; see also \cite{gal,llr}.

Set\vspace{-6pt}
\begin{align*}
&Z_{nl} = \underset{j\in I_l}\max\epsilon_j, \qquad W_{nl} = \underset{j\in I_l}\min\epsilon_j\\
&R_{nl} = Z_{nl} - W_{nl}, \qquad Q_{nl} = \frac{Z_{nl}+ W_{nl}}{2}, \quad l = \overline{1,k}.
\end{align*}

It turns out that Theorems~\ref{thm1} and \ref{thm2} can be significantly simplified in
the case $k=q$.

\begin{thm}\label{t3}

Let for the model \eqref{def_regres}
conditions \eqref{x_j} and \eqref{x_j_V} be satisfied,
$k=q$, and a matrix $V$ satisfies condition \eqref{V_determ}. Then
we have:

\begin{enumerate}
\item[{\rm(i)}]\quad\vspace*{-23pt}
\begin{align}
\label{f15} \hat{\varDelta} &= \frac{1}{2} \max_{1\leq l \leq q}R_{nl},\\
\label{f16} \hat{\theta}_i -\theta_i &= \frac{\det VQ_{(i)}}{\det V} , \quad i= \overline{1,q},
\end{align}
where the matrix $VQ_{(i)}$ is obtained from V by replacement of
the $i$th column by the column $(Q_{n1}, \ldots, Q_{nq})^{T}$.

\item[{\rm(ii)}] If additionally conditions $(A_1), (A_2)$ are satisfied, then
%
\begin{equation}
\label{f17} \lim_{n \rightarrow\infty} \mathbb{P}\bigl(2b_n (
\hat{\varDelta} -a_n ) < x\bigr) = \bigl(G\star G (x)
\bigr)^q ,
\end{equation}
where $ G\star G (x) = \int_{-\infty}^{\infty} G(x-y)dG(y), $
and for $i=\overline{1,q}$, as $n \rightarrow\infty$,
%
\begin{equation}
\label{f18} 2b_n (\hat{\theta}_i -
\theta_i ) \stackrel{D} \longrightarrow \frac{\det V\zeta_{(i)}}{\det V} ,
\end{equation}
the matrix $V\zeta_{(i)}$ is obtained from the $V$ by the
replacement of the $i$th column by the column $(\zeta_1 -
\zeta'_1, \ldots, \zeta_q - \zeta'_q)^{T}$, where all the
r.v.-s $\zeta_i , \zeta'_i
$ are independent and have
d.f. $G$.
\end{enumerate}
\end{thm}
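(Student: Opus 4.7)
The plan is to exploit that with $k=q$ and $\det V\neq 0$ the design decouples into $q$ independent one-dimensional location problems, one per distinct design point. Set $\mu_l=\sum_{i=1}^{q}v_{li}\theta_i$, so that for $j\in I_l$ the model reads $y_j=\mu_l+\epsilon_j$. Because $V$ is invertible, the map $\tau\mapsto V\tau$ is a bijection of $\mathbb{R}^{q}$, and hence
\[
\hat{\varDelta}=\underset{\tau\in\mathbb{R}^{q}}{\min}\,\underset{1\le j\le N}{\max}|y_j-x_j\cdot\tau|=\underset{\mu\in\mathbb{R}^{q}}{\min}\,\underset{1\le l\le q}{\max}\,\underset{j\in I_l}{\max}|y_j-\mu_l|,
\]
and the inner minimization decouples across $l$. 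For each block the elementary Chebyshev-centre identity $\min_{c}\max_{j\in I_l}|\epsilon_j-c|=\tfrac12 R_{nl}$, attained at $c=Q_{nl}$, produces (\ref{f15}) together with $\hat\mu_l-\mu_l=Q_{nl}$. Cramer's rule applied to the linear system $V(\hat\theta-\theta)=(Q_{n1},\ldots,Q_{nq})^{T}$ then yields (\ref{f16}), completing part~(i).

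For part~(ii) the strategy is to combine (A1), (A2) with two standard facts of extreme value theory applied block by block, and then invoke the continuous mapping theorem. By (A2), $b_n(Z_{nl}-a_n)\overset{D}{\to}\zeta_l\sim G$; by the symmetry (A1), $-W_{nl}\stackrel{d}{=}Z_{nl}$, hence $b_n(-W_{nl}-a_n)\overset{D}{\to}\zeta'_l\sim G$. The additional ingredient I would cite from \cite{gal,llr} is that, whenever $F\in D(G)$, the sample maximum and minimum are asymptotically independent after the usual normalization; consequently
\[
\bigl(b_n(Z_{nl}-a_n),\,b_n(-W_{nl}-a_n)\bigr)\overset{D}{\to}(\zeta_l,\zeta'_l),
\]
with $\zeta_l\perp\zeta'_l$. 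Disjointness of the blocks $I_l$ upgrades this to joint convergence of the full $2q$-vector to $2q$ independent copies of $G$, since the extremes of different blocks are functions of disjoint sets of i.i.d.\ errors.

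Both (\ref{f17}) and (\ref{f18}) then follow from part~(i) by the continuous mapping theorem. Indeed $2b_n(\hat\varDelta-a_n)=\max_{l}\{b_n(Z_{nl}-a_n)+b_n(-W_{nl}-a_n)\}$ converges in distribution to $\max_{l}(\zeta_l+\zeta'_l)$, and by independence across $l$ the limiting d.f.\ equals $(G\star G(x))^{q}$, which is (\ref{f17}). Similarly $2b_n Q_{nl}=b_n(Z_{nl}-a_n)-b_n(-W_{nl}-a_n)\overset{D}{\to}\zeta_l-\zeta'_l$, and applying the linear map $V^{-1}$ to (\ref{f16}) (i.e., Cramer's rule once more) gives (\ref{f18}). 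The only genuinely nontrivial input is the asymptotic independence of the sample maximum and minimum under $F\in D(G)$; every other step is either an elementary one-dimensional Chebyshev-centre computation, the linear-algebraic reparametrization by $V$, or a direct continuous-mapping argument.
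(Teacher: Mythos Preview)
Your proof is correct and follows essentially the same route as the paper's: both reduce part~(i) to $q$ independent one-dimensional Chebyshev-centre problems via the nonsingularity of $V$ (the paper works with $d=\tau-\theta$ and its Lemma~2, you with the reparametrization $\mu=V\tau$) and then apply Cramer's rule, and for part~(ii) both invoke the asymptotic independence of the normalized sample maximum and minimum together with the actual independence across the disjoint blocks $I_l$. The only cosmetic difference is that the paper packages the limits for $R_n$ and $Q_n$ into a separate lemma and finishes~(\ref{f18}) via the Cram\'er--Wold device, whereas you appeal directly to joint convergence of the $2q$-vector and the continuous mapping theorem.
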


\begin{remark}\label{rem3}
Suppose that in the model \eqref{def_regres}, under assumptions
\eqref{x_j}, \eqref{x_j_V}, $k < q$, and there exists a
nondegenerate submatrix $\widetilde{V} \subset V$ of order $k$. Then
\begin{gather*}
\hat{\varDelta}\leq\frac{1}{2}\underset{1\leq l\leq k}\max R_{nl} \
\ \ a.s.
\end{gather*}
\end{remark}

\begin{remark}\label{rem4}
For standard LSE,
\begin{gather*}
\hat{\theta_i} - \theta_i \overset{P}= O
\bigl(n^{-1/2}\bigr);
\end{gather*}\vspace{-18pt}\goodbreak
\noindent therefore, if, under the conditions of Theorems~\ref{thm2} and \ref{t3},
\begin{gather}
\label{remar_4} n^{-1/2}b_n\to\infty \quad  \text{as} \  n\to
\infty,
\end{gather}
then MME is more efficient than LSE.
\end{remark}

In \cite{im13} (see also \cite{m14}), it is proved that if $F \in
D(\varLambda)$, then for any $\delta> 0$, $b_n=O(n^{\delta})$. From
this relation and Lemma~\ref{lemma1} it follows that (\ref{remar_4}) is not
satisfied for domains of maximum attraction $D(\varPhi_{\alpha})$ and
$D(\varLambda_{\alpha})$. In the case of domain $D(\varPsi_{\alpha})$,
condition (\ref{remar_4}) holds for $\alpha\in(0,2)$. For
example, assume that r.v.-s~$(\epsilon_j)$ are symmetrically
distributed on the interval $[-1,1]$ and
\begin{gather*}
1-F(1-h) = h^{\alpha}L(h) \quad\text{as} \ \ h\downarrow0, \ \alpha\in(0,2),
\end{gather*}
where $L(h)$ is an s.v.\ function at zero. Then $b_n=n^{1/\alpha}L_1(n)$,
where $L_1$ is an~s.v. at infinity function, and, under the conditions of
Theorems~\ref{thm2} and \ref{t3}, as $n\to\infty$,
\begin{gather*}
|\hat{\theta_i} - \theta_i| \overset{P}= O\bigl(
\bigl(n^{1/\alpha}L_1(n)\bigr)^{-1}\bigr) = o
\bigl(n^{-1/2}\bigr).
\end{gather*}

The next example also appears to be interesting.

\begin{exam}\label{exam3}
Let $(\epsilon_j) $ be uniformly distributed in $[-1, 1] $,
that is, $F(x)= \frac{x+1}{2} , \, x \in[-1, 1] $. It is
well known that $F\in D(\varPsi_1 )$, $a_n = 1, \, b_n =
\frac{n}{2} $. Then, under the conditions of Theorem~\ref{t3}, as $n
\rightarrow\infty$,
\[
\mathbb{P}\bigl( n(1-\hat{\varDelta}) < x \bigr)\rightarrow1 - \bigl[\mathbb{P}\{
\zeta_1+\zeta_2 > x \}\bigr]^q = 1-
(1+x)^q\exp(-qx) ,
\]
where $\zeta_1 , \zeta_2 $ are
i.i.d. r.v.-s, and $\mathbb{P}( \zeta_i
< x ) = 1- \exp(-x) ,\,
x>0 $.
\end{exam}
The following corollary is an immediate consequence of the Theorem~\ref{t3}.

\begin{cor}\label{cor1}

\label{c1} If for simple linear
regression \eqref{example_1}, conditions \eqref{x_j} and \eqref{x_j_V}
are satisfied, $k=q=2$, and
\begin{displaymath}
V= \lleft( %
\begin{matrix}
1 & v_1 \\
1 & v_2
\end{matrix}
 \rright) ,
 \quad v_1
\neq v_2 ,
\end{displaymath}
then we have:
\begin{enumerate}
\item[{\rm(i)}]\quad\vspace*{-28pt}
\begin{align*}
&\hat{\varDelta} = \frac{1}{2} \max(R_{n1}, R_{n2}),\\
&\hat{\theta}_1 -\theta_1 = \frac{Q_{n2} - Q_{n1}}{v_2 - v_1} , \qquad
\hat{\theta}_0 -\theta_0 = \frac{Q_{n1} v_2 - Q_{n2} v_1}{v_2 -
v_1} ;
\end{align*}

\item[{\rm(ii)}] under assumptions $(A_1)$ and $(A_2)$, relation \eqref{f17}
holds for $q= 2 $,
and, as\break $n \rightarrow\infty$,
\[
2b_n (\hat{\theta}_1 -\theta_1 )
\stackrel{D} \longrightarrow \frac{\zeta_2 - \zeta'_2 - \zeta_1 + \zeta'_1 }{v_2 - v_1} ,
\]
\[
2b_n (\hat{\theta}_0 -\theta_0 )
\stackrel{D} \longrightarrow \frac{(\zeta_1 - \zeta'_1) v_2 - (\zeta_2 - \zeta'_2) v_1}{v_2 -
v_1} ,
\]
where the r.v.-s $\zeta_1 , \zeta'_1, \zeta_2, \zeta'_2$ are
independent and have d.f. $G$.
\end{enumerate}
\end{cor}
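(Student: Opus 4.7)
The corollary is declared to be an immediate consequence of Theorem~\ref{t3}, and indeed the plan is simply to verify the hypotheses and then carry out the Cramer-rule computations in the $2\times 2$ setting. First I would check the assumptions of Theorem~\ref{t3}: the model \eqref{example_1} is a special case of \eqref{def_regres} with $q=2$, the design-point set is $\{(1,v_1),(1,v_2)\}$ of cardinality $k=2=q$, and the matrix
\[
V=\begin{pmatrix} 1 & v_1 \\ 1 & v_2 \end{pmatrix}
\]
has $\det V = v_2-v_1\neq 0$, so \eqref{V_determ} is satisfied. Thus both parts (i) and (ii) of Theorem~\ref{t3} apply.

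For part~(i), formula \eqref{f15} of Theorem~\ref{t3} with $q=2$ gives $\hat{\varDelta}=\tfrac{1}{2}\max(R_{n1},R_{n2})$ directly. For the parameter estimates I would apply Cramer's rule \eqref{f16}: with the column $(Q_{n1},Q_{n2})^T$ replacing the first column of $V$ one gets
\[
\det VQ_{(1)}=\det\begin{pmatrix} Q_{n1}& v_1\\ Q_{n2}& v_2\end{pmatrix}=Q_{n1}v_2-Q_{n2}v_1,
\]
and replacing the second column yields $\det VQ_{(2)}=Q_{n2}-Q_{n1}$. Dividing both by $\det V=v_2-v_1$ and identifying $\theta_0,\theta_1$ with the first and second coordinates of the general parameter vector produces the two expressions in the statement.

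For part~(ii), the convergence \eqref{f17} with $q=2$ immediately gives $\mathbb{P}(2b_n(\hat{\varDelta}-a_n)<x)\to(G\star G(x))^2$ under (A1),(A2). For the two coordinates, I would substitute the column $(\zeta_1-\zeta'_1,\zeta_2-\zeta'_2)^T$ into the first and second columns of $V$ in \eqref{f18}, obtaining
\[
\det V\zeta_{(1)}=(\zeta_1-\zeta'_1)v_2-(\zeta_2-\zeta'_2)v_1,\qquad \det V\zeta_{(2)}=(\zeta_2-\zeta'_2)-(\zeta_1-\zeta'_1),
\]
and divide by $v_2-v_1$ to recover the two limit laws displayed in the corollary.

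There is essentially no technical obstacle: the entire proof is verification of \eqref{V_determ} followed by elementary $2\times 2$ determinant evaluations, and the non-trivial probabilistic content is entirely carried by Theorem~\ref{t3}. The only point worth flagging is a notational one, namely the correspondence between $(\theta_0,\theta_1)$ of \eqref{example_1} and the first/second coordinate of the general parameter vector used in \eqref{f16} and \eqref{f18}; once this is fixed, the formulas match line-by-line.
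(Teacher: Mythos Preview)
Your proposal is correct and matches the paper's approach: the paper simply states that the corollary ``is an immediate consequence of Theorem~\ref{t3}'' and gives no further argument, so your explicit verification of \eqref{V_determ} and the $2\times2$ Cramer-rule determinant computations is exactly the intended (and only) content. The notational identification of $(\theta_0,\theta_1)$ with the first and second coordinates that you flag is indeed the only point requiring care, and you have it right.
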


\begin{remark}\label{rem5}
The conditions of Theorem~\ref{t3} do not require \eqref{b_n_tend_infty}.
So it describes the asymptotic distribution of $\hat{\theta}$ even for
nonconsistent MME.
\end{remark}

\section{Proofs of the main results}

Let us start with the following elementary lemma, where $Z_n(t)$,
$W_n(t)$, $R_n(t)$, and $Q_n(t)$ are determined by a sequence $t=\{t_1,
\ldots,t_n\}$ and are respectively the maximum, minimum, range,
and midrange of the sequence $t$.
\begin{lemma}\label{lem2}
Let $t_1, \ldots,t_n$ be any real numbers,
and
\begin{gather}
\label{alpha_n} \alpha_n = \underset{s \in R}\min\underset{1\leq j
\leq n}\max|t_j - s|.
\end{gather}
Then $\alpha_n = R_n(t)/2$; moreover, the minimum in \eqref{alpha_n} is
attained at the point $s = Q_n(t)$.
\end{lemma}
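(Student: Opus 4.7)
The plan is to establish matching upper and lower bounds for the inner maximum, reducing the problem to the two extreme values $Z_n(t)$ and $W_n(t)$ only.

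For the lower bound, I would first observe that for any fixed $s \in \mathbb{R}$,
$$\max_{1 \leq j \leq n} |t_j - s| \geq \max\bigl(|Z_n(t) - s|,\ |W_n(t) - s|\bigr),$$
since both $Z_n(t)$ and $W_n(t)$ appear among the $t_j$. Next I would apply the triangle inequality: since $Z_n(t) \geq W_n(t)$, we have
$$|Z_n(t) - s| + |s - W_n(t)| \geq Z_n(t) - W_n(t) = R_n(t),$$
so the larger of the two absolute values is at least $R_n(t)/2$. Taking the infimum over $s$ yields $\alpha_n \geq R_n(t)/2$.

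For the upper bound, I would plug in the candidate minimizer $s = Q_n(t) = (Z_n(t) + W_n(t))/2$ and show directly that $\max_j |t_j - Q_n(t)| \leq R_n(t)/2$. Indeed, every $t_j$ lies in the interval $[W_n(t), Z_n(t)]$, whose midpoint is $Q_n(t)$ and whose half-length is $R_n(t)/2$, so $|t_j - Q_n(t)| \leq R_n(t)/2$ for every $j$. This both bounds $\alpha_n$ from above by $R_n(t)/2$ and exhibits $s = Q_n(t)$ as an attaining point.

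Combining the two bounds gives $\alpha_n = R_n(t)/2$, attained at $s = Q_n(t)$. There is no real obstacle here; the argument is elementary, and the only thing to note is that the problem is driven entirely by the extreme order statistics, so the interior values $t_j$ play no role beyond lying in $[W_n(t), Z_n(t)]$.
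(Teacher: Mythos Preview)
Your proof is correct and follows essentially the same elementary route as the paper: reduce to the two extreme values $Z_n(t)$ and $W_n(t)$, and verify that the midrange $Q_n(t)$ achieves the value $R_n(t)/2$. The only cosmetic difference is that the paper computes the objective explicitly at every $s=Q_n(t)+\delta$ (obtaining $R_n(t)/2+|\delta|$, which incidentally gives uniqueness of the minimizer), whereas you obtain the lower bound via the triangle inequality; both arguments are equally valid for what the lemma asserts.
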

\begin{proof} Choose $s = Q_n(t)$. Then
\begin{gather*}
\underset{1\leq i\leq n} \max|t_i - s| = Z_n(t) -
Q_n(t) = Q_n(t) - W_n(t) =
\frac{1}{2}R_n(t).
\end{gather*}
If $s = Q_n(t) + \delta$, then, for $\delta> 0$,
\begin{gather*}
\underset{1\leq i\leq n} \max|t_i - s| = s -W_n(t) =
\frac{1}{2}R_n(t) + \delta,
\end{gather*}
and, for $\delta< 0$,
\begin{gather*}
\underset{1\leq i\leq n} \max|t_i - s| = Z_n(t) -s =
\frac{1}{2}R_n(t) - \delta,
\end{gather*}
that is, $s = Q_n(t)$ is the point of minimum.
\end{proof}

\begin{proof}[Proof of Statement~\ref{stat1}.]
We will use Lemma~\ref{lem2}:
\begin{equation*}
\hat{\varDelta} = \underset{\tau\in R^{q}}\min\underset{1\leq j\leq N}
\max\Bigg|\epsilon_j - \sum_{i=1}^q
(\tau_i - \theta_i)x_{ji}\Bigg| \leq
\\
\leq\underset{\tau_1 \in R^{q}}\min\underset{1\leq j\leq
N}\max\big|\epsilon_j - (\tau_1 - \theta_1)\big| =
\frac{1}{2}R_N
\end{equation*}
(we put $\tau_i =0$, $i\geq2$). The point (ii) of Statement~2 
follows directly from Lemma~\ref{lem2}.
\end{proof}

\begin{proof}[Proof of Theorem~\ref{thm1}.]
Using the notation
\[
d= (d_1 , \ldots, d_q ) , \quad d_i =
\tau_i -\theta_i , \ i={\overline{1, q}},
\]
and taking into account Eq.~(\ref{def_regres}), conditions
(\ref{x_j}) and (\ref{x_j_V}), we rewrite LPP
(\ref{LPP}) in the following form:
\begin{align}
\label{f21} \hat{\varDelta} &\,{=}\, \min_{\varDelta \in\mathcal{D}_1 } \varDelta ,\\
\mathcal{D}_1 &\,{=}\, \Biggl\{(d,\varDelta)\in\mathbb{R}^q \times\mathbb{R}_{+}: \sum_{i=1}^q d_i x_{ji} + \varDelta \geq\epsilon_j , -\sum_{i=1}^q d_i x_{ji} + \varDelta\geq-\epsilon_j , j={\overline{1, N}} \Biggr\}\notag\\
& \,{=}\,  \Biggl\{(d,\varDelta)\in\mathbb{R}^q \,{\times}\,\mathbb{R}_{+}\,{:}\,\sum_{i=1}^q d_i v_{li} \,{+}\, \varDelta \,{\geq}\, Z_{nl} , {-}\sum_{i=1}^q d_i v_{li} \,{+}\, \varDelta\geq-W_{nl} , l={\overline{1, k}} \Biggr\}.\notag
\end{align}
LPP dual to (\ref{f21}) has the form
%
\begin{equation}
\label{f22} \max_{u \in\mathcal{D}^{*} }L_n^{*} (u) ,
\end{equation}
where $L_n^{*} (u) = \sum_{l=1}^k (u_l Z_{nl} - u'_l
W_{nl}) $, and the domain $\mathcal{D}^{*}$ is given by
(\ref{D_star}).

According to the basic duality theorem (\cite{mur}, Chap.~4),
\[
\hat{\varDelta} = \max_{u \in\mathcal{D}^{*} }L_n^{*} (u) .
\]
Hence, we obtain
\begin{align*}
b_n (\hat{\varDelta} -a_n ) &= \max_{u \in\mathcal{D}^{*} }
b_n \bigl(L_n^{*} (u) -a_n
\bigr) = \max_{u \in\mathcal{D}^{*} } g_n (u) ,\\
g_n (u) &= \sum_{l=1}^k
\bigl[u_l b_n (Z_{nl} -a_n ) +
u'_l b_n (-W_{nl}
-a_n ) \bigr] .
\end{align*}

Denote by $\varGamma^{*}$ the set of vertices of the domain
$\mathcal{D}^{*} $ and
\[
g_0 (u) = \sum_{l=1}^k
\bigl(u_l \zeta_l + u'_l
\zeta'_l \bigr) .
\]
Since the maximum in LPP (\ref{f22}) is attained at one of the
vertices $\varGamma^{*}$,
\[
\max_{u \in\mathcal{D}^{*} } g_n (u)= \max_{u \in\varGamma^{*} }
g_n (u) , \quad n\geq1 .
\]
Obviously, $\operatorname{card}( \varGamma^{*}) < \infty$. Thus, to prove
(\ref{theor_1_conv}), it suffices to prove that, as $n
\rightarrow\infty$
\[
\max_{u \in\varGamma^{*} } g_n (u) \stackrel{D} \longrightarrow
\max_{u \in\varGamma^{*} } g_0 (u)
\]
or
%
\begin{equation}
\label{f23} \bigl(g_n (u), {u \in\varGamma^{*} }
\bigr)\stackrel{D} \longrightarrow \bigl(g_0 (u), {u \in
\varGamma^{*} } \bigr) .
\end{equation}

The Cramer--Wold argument (see, e.g., \S7 of the book
\cite{bil}) reduces (\ref{f23}) to the following relation: for
any $t_m \in R$ , as $n \rightarrow\infty$,
\[
\sum_{u^{(m)} \in\varGamma^{*} } g_n \bigl(u^{(m)}
\bigr)t_m \stackrel{D} \longrightarrow \sum
_{u^{(m)} \in\varGamma^{*} } g_0 \bigl(u^{(m)}
\bigr)t_m .
\]
The last convergence holds if for any $c_l , c'_l$, as $n
\rightarrow\infty$,
%
\begin{equation}
\label{f24} \sum_{l=1}^k
\bigl[c_l (Z_{nl} - a_n) +
c'_l (-W_{nl} - a_n) \bigr]
\stackrel {D} \longrightarrow \sum_{l=1}^k
\bigl(c_l \zeta_l + c'_l
\zeta'_l \bigr) .
\end{equation}

Under the conditions of Theorem $1$,
\begin{align}
\label{f25} \zeta_{nl}&= b_n (Z_{nl}
-a_n ) \stackrel{D} \longrightarrow \zeta_l ,
\nonumber
\\
\zeta'_{nl} &= b_n (-W_{nl}
-a_n ) \stackrel{D} \longrightarrow\zeta'_l
, \quad l={\overline{1, k}} .
\end{align}
The vectors $(Z_{nl}, W_{nl})$, $l={\overline{1, k}}$,
are independent, and, on the other hand, $Z_{nl}$ and
$W_{nl}$ are asymptotically independent as $n \rightarrow\infty$
(\cite{llr}, p.~28). To obtain (\ref{f24}), it remains
to apply once more the Cramer--Wold argument. 
\end{proof}

\begin{proof}[Proof of Theorem~\ref{thm2}.]
Let $\hat{d} = (\hat{d}_1 ,
\ldots, \hat{d}_q ), \hat{\varDelta} $ be the
solution of LPP (\ref{f21}), and $\gamma_l = \sum_{i=1}^q
\hat{d}_i v_{li}$. Then, for any $l={\overline{1, k}}$,
\begin{align}
\label{f26} \gamma_l + \hat{\varDelta}&\geq Z_{nl} ,\nonumber\\
-\gamma_l + \hat{\varDelta}&\geq -W_{nl} .
\end{align}
Rewrite the asymptotic relation (\ref{f25}) and (\ref{theor_1_conv})
in the form
\begin{align}
\label{f27} &Z_{nl} = a_n + \frac{\zeta_{nl}}{b_n}, \qquad -W_{nl} = a_n + \frac{\zeta'_{nl}}{b_n},\\
&\zeta_{nl} \stackrel{D} \longrightarrow \zeta_l , \qquad
\zeta'_{nl} \stackrel{D} \longrightarrow
\zeta'_l,\nonumber
\end{align}
and
\begin{align}
\label{f28} &\hat{\varDelta} = a_n + \frac{\varDelta_{n}}{b_n},\\
&\varDelta_{n} \stackrel{D} \longrightarrow \varDelta_0 \quad
as \ n \rightarrow\infty.\nonumber
\end{align}
Combining (\ref{f26})--(\ref{f28}), we obtain, for $l={\overline{1, k}}$,
\begin{align*}
\gamma_{l} \geq Z_{nl}- \hat{\varDelta} &= \frac{\zeta_{nl}-\varDelta_{n}}{b_n} = O\bigl(b_n^{-1}\bigr),\\
\gamma_{l}\leq W_{nl}+ \hat{\varDelta} &= \frac{-\zeta'_{nl}+\varDelta_{n}}{b_n} =O\bigl(b_n^{-1}\bigr).
\end{align*}
Choose $l_1 , \ldots, l_q $ satisfying
(\ref{V_determ}). Then
\[
\sum_{i=1}^q \hat{d}_i
v_{l_j i} = \gamma_{l_j } = O\bigl(b_n^{-1}
\bigr), \quad j={\overline{1, q}},
\]
and by Cramer's rule,
\[
\hat{\theta}_i - \theta_i = \hat{d}_i =
\frac{\det\tilde{V}\gamma_{(i)}}{\det\tilde{V}} = O\bigl(b_n^{-1}\bigr),
\]
where the matrix $\tilde{V}\gamma_{(i)}$ is obtained from $\tilde{V}$
by replacement of the $i$th column by the column $(\gamma_{l_1}
, \ldots, \gamma_{l_q} )^T $. 
\end{proof}

\begin{proof}[Proof of Theorem \ref{t3}.]
(i) We have
\begin{align}
\label{f29} \varDelta&=  \min_{\tau\in R^q } \max_{1\leq l \leq q}\max_{j \in I_l}\left|y_j -\sum_{i=1}^{q}\tau_i v_{li}\right|\nonumber\\
& =  \min_{d \in R^q } \max_{1\leq l \leq q} \max_{j \in I_l}\left|\epsilon_j -\sum_{i=1}^{q} d_i v_{li}\right|.
\end{align}
By Lemma~\ref{lem2},
\[
\min_{s\in R } \max_{j \in I_l}|\epsilon_j
-s| = \frac{1}{2} R_{nl} \quad as \ s=Q_{nl} ,
\ l={\overline{1, q}} .
\]
Therefore, the minimum in $d$ is attained in (\ref{f29}) at the point
$\hat{d}$ being the solution of the system of linear equations
\[
\sum_{i=1}^{q} d_i
v_{li}= Q_{nl} , \quad l={\overline{1, q}} .
\]
Since the matrix $V$ is nonsingular, by Cramer's rule
\[
\hat{d}_i = \hat{\theta}_i - \theta_i =
\frac{\det VQ_{(i)}}{\det V} , \quad i={\overline{1, q}} .
\]
Obviously, for such a choice of $\hat{d}$, $\varDelta=
\frac{1}{2}\max_{1\leq l \leq q} R_{nl}$, thats is, we have obtained
formulae (\ref{f15}) and (\ref{f16}).

(ii) Using the asymptotic independence of r.v.-s $Z_n $ and
$W_n $, we derive the following statement.

\begin{lemma}\label{l3}
If r.v.-s $(\epsilon_j) $ satisfy conditions $(A_1)$, $(A_2 ) $,
then, as $n \rightarrow\infty$,
\begin{align}
\label{f30} b_n (R_n - 2a_{n} )&\stackrel{D} \longrightarrow \zeta +\zeta',\\
\label{f31} 2b_n Q_n &\stackrel{D} \longrightarrow\zeta-\zeta',
\end{align}
where $\zeta$ and $\zeta'$ are independent r.v.-s and have
d.f. $G$.
\end{lemma}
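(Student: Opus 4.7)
The plan is to express both $b_n(R_n - 2a_n)$ and $2b_n Q_n$ as linear functions of the pair
$\xi_n := b_n(Z_n - a_n)$ and $\eta_n := b_n(-W_n - a_n)$, establish joint convergence of this pair to a vector with independent coordinates, and invoke the continuous mapping theorem. The algebraic identities
\begin{align*}
b_n(R_n - 2a_n) &= \xi_n + \eta_n,\\
2b_n Q_n &= \xi_n - \eta_n
\end{align*}
make this reduction immediate.

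First I would verify the marginal convergences. Assumption $(A_2)$ gives $\xi_n \stackrel{D}{\longrightarrow} \zeta$ with d.f.\ $G$ directly. For $\eta_n$ I use $(A_1)$: the symmetry of the $\epsilon_j$ implies that $(-\epsilon_1,\ldots,-\epsilon_n)$ has the same joint distribution as $(\epsilon_1,\ldots,\epsilon_n)$, so $-W_n = \max_{1\le j\le n}(-\epsilon_j)$ is distributed as $Z_n$; therefore $\eta_n \stackrel{D}{\longrightarrow} \zeta'$, where $\zeta'$ has d.f.\ $G$.

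The crux of the argument is promoting these two marginal convergences to a joint convergence with independent limits. Here I would appeal to the classical fact that under $(A_2)$ the sample extrema $Z_n$ and $W_n$ are asymptotically independent (\cite{llr}, p.~28), a result already invoked in the proof of Theorem~\ref{thm1}. Combined with the marginal tightness just established, this yields
$$(\xi_n,\eta_n) \stackrel{D}{\longrightarrow} (\zeta,\zeta'),$$
with $\zeta$ and $\zeta'$ independent and each distributed according to $G$. Applying the continuous mapping theorem to the maps $(x,y)\mapsto x+y$ and $(x,y)\mapsto x-y$ then delivers \eqref{f30} and \eqref{f31}.

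The only delicate ingredient is the asymptotic independence of $Z_n$ and $W_n$; everything else is routine manipulation together with the symmetry hypothesis $(A_1)$, which plays the dual role of identifying the marginal law of $\eta_n$ and of ensuring that $\zeta$ and $\zeta'$ are identically distributed.
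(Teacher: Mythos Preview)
Your proposal is correct and matches the paper's approach exactly: the paper introduces Lemma~\ref{l3} with the sentence ``Using the asymptotic independence of r.v.-s $Z_n$ and $W_n$, we derive the following statement'' and then simply cites Theorem~2.9.2 of \cite{gal} (and Theorem~2.10 of \cite{m14}) in lieu of a proof, so your write-up in fact supplies the details behind that citation. The algebraic identities, the use of symmetry to identify the law of $-W_n$ with that of $Z_n$, and the appeal to asymptotic independence of the extremes are precisely the ingredients the references contain.
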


In fact, this lemma is contained in Theorem $2.9.2 $ of the book
\cite{gal} (see also Theorem $2.10 $ in \cite{m14}).

Equality (\ref{f17}) of Theorem $3$ follows immediately from
relation (\ref{f30}) of Lemma \ref{l3}.

Similarly, from the asymptotic relation (\ref{f31} ) and Eq.~(\ref
{f16}) we obtain (\ref{f18}) applying once more the Cramer--Wold
argument. 
\end{proof}

Remark\ref{rem3} follows directly from Theorem~\ref{t3}. Indeed, let $ k < q
$, and let there exist a nonsingular submatrix $\widetilde{V}
\subset V$,
\begin{gather*}
\widetilde{V} = \lleft(%
\begin{matrix}
v_{1 i_{1}} & \ldots& v_{1 i_{k}} \\
\ldots& \ldots& \ldots\\
v_{k i_{1}} & \ldots& v_{k i_{k}} \\
\end{matrix}
 \rright).
\end{gather*}
Choosing in LPP (21) from Theorem~\ref{thm1}, $d_i = 0 $ for all $ i \neq i_1 ,
i_2 , \ldots i_k$ (i.e., taking $\tau_i = \theta_i
$ for such indices $i$), we pass to the problem (29). It remains
to apply Eq.~(15) of Theorem~\ref{t3}.

\begin{remark}\label{rem6} Using the notation $\bar{\zeta} - \bar{\zeta'} =
(\zeta_1-
\zeta'_1 , \ldots, \zeta_q- \zeta'_q)^{T} $, the
coordinatewise relation \eqref{f18} of Theorem $3$ can be
rewritten in the equivalent vector form
%
\begin{equation}
\label{f32} 2b_n(\hat{\theta} - \theta) \stackrel{D}
\longrightarrow V^{-1} \bigl(\bar{\zeta} - \bar{\zeta'}
\bigr) \quad as \ n \rightarrow \infty.
\end{equation}
If $\operatorname{Var}\zeta= \sigma_G^2 $ of r.v. $\zeta$ having d.f.G exists,
then the covariance matrix of the limiting distribution in \eqref{f32}
is $C_G = 2\sigma_G^2 (V^T V )^{-1}$.
\end{remark}

%

\end{document}